\documentclass{amsart}
\usepackage{stmaryrd,amsmath,amssymb}
\usepackage{tikz,fullpage}
\usetikzlibrary{shapes,backgrounds}

\begin{document}

\newcommand{\zero}{\mathbf{0}}
\newcommand{\one}{\mathbf{1}}
\newcommand{\BC}[1]{\langle #1\rangle}

\newtheorem{obs}{Observation}
\newtheorem{lem}[obs]{Lemma}
\newtheorem{cor}[obs]{Corollary}
\newtheorem{prop}[obs]{Proposition}
\newtheorem{thm}[obs]{Theorem}
\newtheorem{rem}[obs]{Remark}

\newtheorem{De}[obs]{Definition}
\newtheorem{Ex}[obs]{Example}

\title{Inhabitants of interesting subsets of the Bousfield lattice}

\author{Andrew Brooke-Taylor$^1$, Benedikt L\"owe$^{2,3,4}$ \&
Birgit Richter$^{3}$}

\address{$^1$
School of Mathematics,
University of Leeds,
Leeds LS2 9JT,
United Kingdom}

\address{$^2$ Institute for Logic, Language and Computation,
Universiteit van Amsterdam, Postbus 94242, 1090 GE Amsterdam, The
Netherlands}

\address{$^3$ Fachbereich
 Mathematik, Universit\"at Hamburg,
Bundesstrasse 55, 20146 Hamburg, Germany}

\address{$^4$ Department of Pure Mathematics and Mathematical Statistics,
Christ's College, \& Churchill College, University of Cambridge,
Wilberforce Road, Cambridge CB3 0WB, United Kingdom}

\email{a.d.brooke-taylor@leeds.ac.uk}
\email{b.loewe@uva.nl}
\email{birgit.richter@uni-hamburg.de}
\date{\today}
\keywords{Bousfield classes, Bousfield lattice}
\subjclass[2010]{Primary 55P42, 55P60; Secondary 55N20}

\begin{abstract}
The set of Bousfield classes has some important subsets such as the
distributive lattice $\mathbf{DL}$ of all classes $\BC{E}$ which are
smash idempotent and the complete Boolean algebra $\mathbf{cBA}$ 
of closed classes. We provide examples of spectra that are
in $\mathbf{DL}$, but not in $\mathbf{cBA}$; in particular, for every
prime $p$, the Bousfield class of the Eilenberg-MacLane spectrum $\BC{H\mathbb{F}_p}$ is in
$\mathbf{DL}{\setminus}\mathbf{cBA}$. \end{abstract}

\thanks{
The first author acknowledges the financial support of the Research 
Networking Programme INFTY funded by the \textsl{European Science 
Foundation} (ESF),
the \textsl{Japan Society for the Promotion of Science} (JSPS) via a 
Postdoctoral Fellowship for Foreign Researchers and JSPS Grant-in-Aid 
\textsf{2301765},
and the \textsl{Engineering and Physical Sciences Research Council} 
(EPSRC) via the Early Career Fellowship \emph{Bringing Set Theory \& Algebraic
Topology Together} (\textsf{EP/K035703/1}).
The second author acknowledges financial support in the form of a VLAC 
fellowship-in-residence at the \textsl{Koninklijke Vlaamse Academie von 
Belgi\"e voor Wetenschappen en Kunsten} and an International Exchanges 
grant of the \textsl{Royal Society} (reference \textsf{IE141198}).
All three authors were Visiting Fellows of the research programme 
\emph{Semantics \& Syntax} (SAS) at the \textsl{Isaac Newton Institute 
for Mathematical Sciences} in Cambridge, England whilst part of this 
research was done.
}

\maketitle

\section{Introduction \& Definitions}

In the original paper \cite{bousfield} introducing the Bousfield lattice
$\mathbf{B}$, Bousfield also introduces its subsets $\mathbf{BA}$ and
$\mathbf{DL}$ and identifies the location of many explicit Bousfield
classes. In \cite[Definition 6.3]{hp1999}, Hovey and Palmieri add a
third interesting subset, denoted by $\mathbf{cBA}$. (We shall give
definitions below.) It is easy to see that
$$\mathbf{BA} \subseteq
\mathbf{cBA} \subseteq \mathbf{DL} \subseteq \mathbf{B}.$$
In this
paper, we deal with the question of which and how many spectra live in the
various parts of $\mathbf{B}$ defined by this chain of inclusions. The
main cardinality results of this paper (lower bounds) are graphically
represented as in Figure \ref{venn} and concern the dark grey parts.

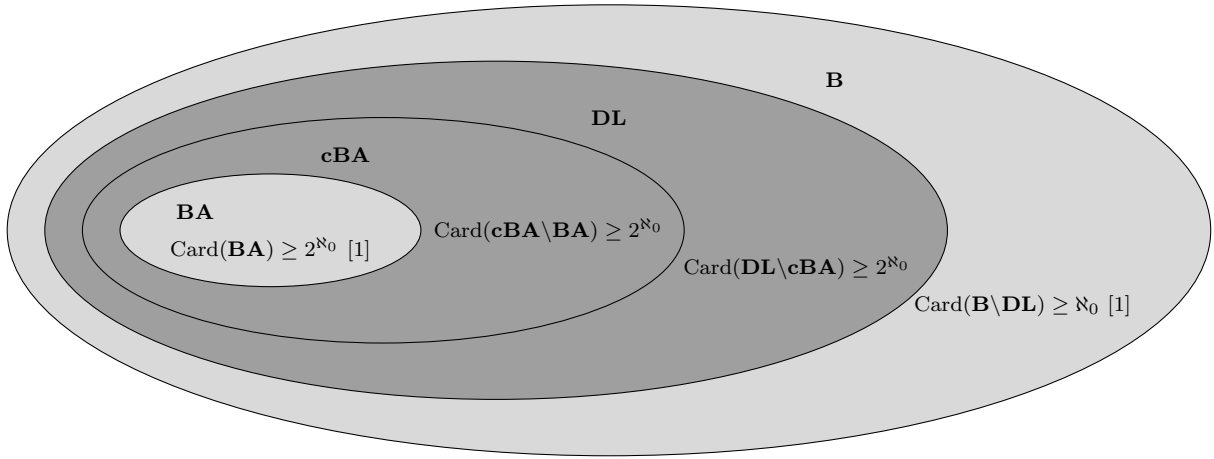
\begin{figure}[h]
\begin{center}
{\footnotesize
\begin{tikzpicture}

\draw[fill=gray!30] (-5,1)
ellipse (8 and 3);

\draw[fill=gray!75] (-6.5,1)
ellipse (6 and 2.25);

\draw[fill=gray!75] (-8,1)
ellipse (4 and 1.5);

\draw[fill=gray!30] (-9.5,1)
ellipse (2 and .75);

\node (BA) at (-10.5,1.25) {$\mathbf{BA}$};
\node (BA2) at (-9.5,0.75) {$\mathrm{Card}(\mathbf{BA})\geq2^{\aleph_0}$ \cite{bousfield}};

\node (cBA) at (-8.5,2) {$\mathbf{cBA}$};
\node (cBA2) at (-5.8,1)
{$\mathrm{Card}(\mathbf{cBA}{\setminus}\mathbf{BA})\geq 2^{\aleph_0}$};

\node (DL) at (-5,2.5) {$\mathbf{DL}$};
\node (DL2) at (-2.5,.5)
{$\mathrm{Card}(\mathbf{DL}{\setminus}\mathbf{cBA})\geq 2^{\aleph_0}$};

\node (B) at (-2,3) {$\mathbf{B}$};
\node (B2) at (.5,0)
{$\mathrm{Card}(\mathbf{B}{\setminus}\mathbf{DL})\geq \aleph_0$ \cite{bousfield}};

\end{tikzpicture}}
\end{center}
\caption{Lower bounds for the sizes of the four differences of subsets of
$\mathbf{B}$.}\label{venn}
\end{figure}

\section{Definitions}

In order to fix notation, we give the relevant definitions, following
closely the exposition in \cite{hp1999}.  We consider the Bousfield
equivalence of spectra \cite{bousfield}: two spectra $X$ and $Y$ are
equivalent if for all spectra $E$, $X_*(E) = 0$ if and only $Y_*(E)=0$
(alternatively put: $X\wedge E\simeq *$ if and only if $Y\wedge E\simeq *$).
For a spectrum $X$, we write $\BC{X}$ for the class of all spectra $E$
with $X_*(E)=0$.  The class of all Bousfield classes is denoted by
$\mathbf{B}$. By a theorem of Ohkawa \cite{ohkawa,dp2001}, it is known
that $\mathbf{B}$ is a set and
$$2^{\aleph_0} \leq
\mathrm{Card}(\mathbf{B}) \leq 2^{2^{\aleph_0}}.$$
This set is a poset
with respect to reverse inclusion: $\BC{X}\leq \BC{Y}$ if and only if
for all spectra $Z$, $Y_*Z= 0$ implies $X_*Z=0$. The poset
$(\mathbf{B},\leq)$ has a largest element $\one := \BC{S}$ where $S$ is
the sphere spectrum and we denote by $\zero$ the minimal element which
is the Bousfield class of the trivial spectrum.  
We work at a fixed but arbitrary prime $p$, \emph{i.e.}, we consider $p$-local 
spectra. 

For every prime $p$,
$K(n)$ denotes the $n$th Morava $K$-theory spectrum with coefficients
$\pi_*(K(n)) = \mathbb{F}_p[v_n^{\pm 1}]$ where the degree of $v_n$ is
$2p^n-2$. We use the convention that $K(\infty)$ is the mod $p$
Eilenberg-MacLane spectrum, $H\mathbb{F}_p$. For any subset $S
\subseteq \mathbb{N} \cup \{\infty\}$, we denote by $K(S)$ the spectrum
$\bigvee_{n \in S} K(n)$. 

The topological operations $\wedge$ and $\vee$ of taking smash products
and wedges, respectively, are well-defined on $\mathbf{B}$; the class
$\BC{\bigvee_{i\in I} X_i}$ is the least upper bound (``join'') in the 
structure
$(\mathbf{B},\leq)$ of the classes $\BC{X_i}$ \cite[(2.2)]{bousfield}, 
but in general, $\wedge$ does
not produce the greatest lower bound. We can define the greatest lower
bound (``meet'') by
$$\bigcurlywedge \mathcal{X} := \bigvee\{Z\,;\,\forall X\in\mathcal{X}(Z\leq X)\},$$
and observe that $\wedge$ and $\curlywedge$ can differ quite a bit: the
Brown-Comenetz dual $I$ of the $p$-local sphere spectrum satisfies
$\BC{I}\wedge\BC{I} = \zero \neq \BC{I} = \BC{I}\curlywedge\BC{I}$
\cite[Lemma 2.5]{bousfield}.

The complete lattice $(\mathbf{B},\curlywedge,\vee)$ is endowed with a pseudo-complementation function
$$aX := \bigvee\{Z\,;\,Z\wedge X = 0\}$$
which is well-defined on Bousfield classes, \emph{i.e.}, $a\BC{X} := \BC{aX}$
is independent of the choice of representative $X$ of $\BC{X}$.
The function $a$ is not in general a complement. While $a^2 =
\mathrm{id}$ and $a\BC{X}\wedge \BC{X} = \zero$, we may not have
$a\BC{X}\vee \BC{X} = \one$ \cite[Lemma 2.7]{bousfield}. Bousfield
defined two subclasses of $\mathbf{B}$ as follows:
\begin{align*}
\mathbf{BA} &:= \{\BC{X}\,;\,\BC{X}\vee a\BC{X} = \one\}\mbox{, and}\\
\mathbf{DL} &:= \{\BC{X}\,;\,\BC{X}\wedge\BC{X}=\BC{X}\}.
\end{align*}

Many examples for classes in $\mathbf{BA}$ or $\mathbf{DL}$ are known. 
Bousfield showed in \cite{bousfield} that every Moore spectrum of an 
abelian group is in $\mathbf{BA}$ and so are the periodic topological 
$K$-theory spectra $\BC{KO} = \BC{KU}$; furthermore, he shows that 
(arbitrary joins of) finite CW spectra also give classes in $\mathbf{BA}$. 
Every class of a 
ring spectrum is in $\mathbf{DL}$ but not necessarily in $\mathbf{BA}$ 
\cite[\S\;2.6]{bousfield}; in particular, all Eilenberg-MacLane spectra 
of rings are in $\mathbf{DL}$, but, \emph{e.g.}, the class of the 
Eilenberg-MacLane spectrum of the integers, $\BC{H\mathbb{Z}}$, is in 
$\mathbf{DL} {\setminus} \mathbf{BA}$ \cite[Lemma 2.7]{bousfield}. 
However, the Brown-Comenetz duals of ($p$-local) spheres are not in 
$\mathbf{DL}$ \cite[Lemma 2.5]{bousfield}.

We have that $\mathbf{BA}\subseteq\mathbf{DL}$; on $\mathbf{DL}$,
$\wedge$ and $\curlywedge$ coincide, and $(\mathbf{DL},\wedge,\vee)$ is
a distributive lattice. Furthermore, on $\mathbf{BA}$, $a$ is a true
complement, so $(\mathbf{BA},\wedge,\vee,\zero,\one,a)$ is a Boolean
algebra, but not complete.

There is a retraction from $\mathbf{B}$ to $\mathbf{DL}$ defined by 
$$r\BC{X} := \bigvee\{\BC{Z}\,;\,\BC{Z}\in\mathbf{DL}\mbox{ and 
}\BC{Z}\leq \BC{X}\}.$$ The pseudo-complementation function $a$ may not 
respect $\mathbf{DL}$, \emph{i.e.}, it could be that $\BC{X}\in\mathbf{DL}$, 
but $a\BC{X}\notin\mathbf{DL}$. On $\mathbf{DL}$, we therefore define a 
new pseudo-complement by
$$A\BC{X} := ra\BC{X}.$$
While $A^3 = A$ and $\BC{X} \leq A^2\BC{X}$, it is not in general the 
case that $A^2 = \mathrm{id}$. It is known \cite[Lemma~6.2(d)]{hp1999}
that $A$ converts joins to meets,
\emph{i.e.}, 
$$A(\bigvee\mathcal{X}) = \bigcurlywedge\{A\BC{X}\,;\,X\in\mathcal{X}\}.$$
Following \cite[Definition 6.3]{hp1999}, we define
$$\mathbf{cBA} := \{\BC{X} \in \mathbf{DL} \,;\,A^2\BC{X} = \BC{X}\}.$$
The set $\mathbf{cBA}$ carries a complete Boolean algebra structure \cite[Theorem 6.4]{hp1999}; however, it is not \linebreak $(\mathbf{cBA},\wedge,\vee,\zero,\one,A)$, but instead
$(\mathbf{cBA},\wedge,\curlyvee,\zero,\one,A)$ with $\curlyvee$ defined by
$$\bigcurlyvee \mathcal{X} := A^2\bigvee\mathcal{X}.$$

\section{Results}

We start with an observation on joins of elements in $\mathbf{BA}$ and 
use this to derive lower bounds for the size of 
$\mathbf{DL}{\setminus}\mathbf{cBA}$ and 
$\mathbf{cBA}{\setminus}\mathbf{BA}$.

\begin{lem}\label{lem:curly}
If $\mathcal{X}\subseteq\mathbf{BA}$, then $\bigcurlyvee\mathcal{X} = \bigvee\mathcal{X}$. In particular,
$\bigvee\mathcal{X} \in \mathbf{cBA}$.
\end{lem}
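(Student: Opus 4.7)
Writing $Y := \bigvee \mathcal{X}$, the defining equality $\bigcurlyvee \mathcal{X} = A^2 Y$ reduces the first assertion to $A^2 Y = Y$, and this, together with $Y \in \mathbf{DL}$, will then force $Y \in \mathbf{cBA}$; so both claims follow once we show $Y \in \mathbf{DL}$ and $A^2 Y = Y$. The former is disposed of at once: each $\BC{X} \in \mathcal{X} \subseteq \mathbf{BA}$ is smash idempotent, and distributing $\wedge$ over the wedge gives $Y \wedge Y \geq \bigvee_{X \in \mathcal{X}} \BC{X} \wedge \BC{X} = Y$, the reverse inequality being automatic.

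For the main assertion $A^2 Y = Y$, my plan is to identify $AY$ with the plain pseudo-complement $aY$, by computing each as the same meet $\bigcurlywedge\{a\BC{X} : \BC{X} \in \mathcal{X}\}$. On the $A$ side, for each $\BC{X} \in \mathbf{BA}$ the element $a\BC{X}$ again lies in $\mathbf{BA} \subseteq \mathbf{DL}$ (since $a^2 = \mathrm{id}$ makes the defining condition $\BC{X} \vee a\BC{X} = \one$ symmetric in $\BC{X}$ and $a\BC{X}$), so $A\BC{X} = r(a\BC{X}) = a\BC{X}$; the formula $A(\bigvee \mathcal{X}) = \bigcurlywedge A\BC{X}$ from \cite[Lemma~6.2(d)]{hp1999} then delivers $AY = \bigcurlywedge a\BC{X}$. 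On the $a$ side, distributivity of $\wedge$ over arbitrary wedges gives $Z \wedge Y = \bigvee_{X}(Z \wedge \BC{X})$, so $Z \wedge Y = \zero$ is equivalent to $Z \leq a\BC{X}$ for every $X$, hence to $Z \leq \bigcurlywedge a\BC{X}$; taking the join over all such $Z$ yields $aY = \bigcurlywedge a\BC{X}$ as well.

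Once $aY = AY$ is in hand, we know in particular that $aY$ lies in $\mathbf{DL}$, and the conclusion drops out from $a^2 = \mathrm{id}$ together with $Y \in \mathbf{DL}$ via
\[
A^2 Y \;=\; A(aY) \;=\; r(a(aY)) \;=\; r(Y) \;=\; Y.
\]
I do not anticipate a substantive obstacle; the step that needs most care is the $a$-side computation, which relies both on smash--wedge distributivity in $\mathbf{B}$ and on the identity $a\BC{X} \wedge \BC{X} = \zero$ from \cite[Lemma~2.7]{bousfield} underlying the equivalence $Z \wedge \BC{X} = \zero \Leftrightarrow Z \leq a\BC{X}$.
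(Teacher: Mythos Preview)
Your proposal is correct and follows essentially the same route as the paper: both arguments hinge on the fact that for $\BC{X}\in\mathbf{BA}$ one has $a\BC{X}\in\mathbf{DL}$, so that the meet $\bigcurlywedge a\BC{X}$ lies in $\mathbf{DL}$ and the retraction $r$ acts trivially at the relevant stages. The only organisational difference is that you isolate the intermediate identity $AY=aY$ and then invoke $a^2=\mathrm{id}$, whereas the paper unfolds $A^2Y=rara\,Y$ directly, using that $a$ swaps joins and meets in both directions; your version is slightly more self-contained (you re-derive $aY=\bigcurlywedge a\BC{X}$ from the definition and make explicit the step $Y\in\mathbf{DL}$, which the paper uses tacitly in its final equality $r\bigvee\mathcal{X}=\bigvee\mathcal{X}$).
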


\begin{proof}
We have that
$$\bigcurlyvee\mathcal{X} = A^2\bigvee\mathcal{X} = rara\bigvee\mathcal{X},$$
and as $a$ converts joins to meets, the latter is equal to
$$rar\bigcurlywedge \{a\BC{X}\,;\,\BC{X}\in \mathcal{X}\}.$$
Since every $a\BC{X}$ is in $\mathbf{BA}$, it is also in $\mathbf{DL}$, and as 
$\mathbf{DL}$ is complete, 
$$\Xi := \bigcurlywedge \{a\BC{X}\,;\,\BC{X}\in \mathcal{X}\} \in\mathbf{DL}$$
and hence $r\Xi = \Xi$. 
Therefore, as $a$ sends meets to joins,
\begin{align*}
rar\Xi & = ra\Xi\\
&= r\bigvee \{a^2\BC{X}\,;\,\BC{X}\in\mathcal{X}\}\\
&= r \bigvee\{\BC{X}\,;\,\BC{X}\in\mathcal{X}\}\\
&= \bigvee\mathcal{X}.
\end{align*}
\end{proof}

\begin{prop} \label{prop:kinfinite}
If $S \subseteq \mathbb{N}$ is infinite, then $\BC{K(S)} =
\bigvee_{i\in S} \BC{K(i)} \in \mathbf{cBA}{\setminus}\mathbf{BA}$ and
$\BC{K(S)} \geq \BC{I}$.
\end{prop}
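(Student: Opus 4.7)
The plan is to verify the assertions of the proposition in turn.

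First, $\BC{K(S)} = \bigvee_{i \in S} \BC{K(i)}$ is immediate from the definition $K(S) = \bigvee_{n \in S} K(n)$ together with the recalled fact that wedges of spectra realize joins in $\mathbf{B}$. Next, $\BC{K(S)} \in \mathbf{cBA}$ will be obtained by applying Lemma~\ref{lem:curly} to the family $\{\BC{K(n)} : n \in S\}$; the hypothesis of that lemma holds because each $\BC{K(n)}$ belongs to $\mathbf{BA}$ by a classical result of Bousfield (each Morava $K$-theory class is genuinely complemented by its pseudo-complement).

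The inequality $\BC{K(S)} \geq \BC{I}$ is the main technical ingredient and, I expect, the principal obstacle: it does not follow from formal lattice manipulation but instead from the chromatic position of the Brown--Comenetz dual $I$. I plan to extract it from known facts about the location of $\BC{I}$ in the $p$-local Bousfield lattice, as developed in~\cite{hp1999}.

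Granting that inequality, $\BC{K(S)} \notin \mathbf{BA}$ will follow by contradiction. The key auxiliary fact I will use is that for $\BC{X} \in \mathbf{BA}$ and $\BC{Y} \leq \BC{X}$, one has $\BC{Y} = \BC{Y} \wedge \BC{X}$. This is seen by writing
$$\BC{Y} = \BC{Y}\wedge(\BC{X}\vee a\BC{X}) = (\BC{Y}\wedge\BC{X})\vee(\BC{Y}\wedge a\BC{X})$$
using distributivity of smash over wedge in $\mathbf{B}$, and observing that $\BC{Y} \leq \BC{X}$ forces every $X$-acyclic spectrum to be $Y$-acyclic, so $\BC{Y}\wedge a\BC{X}=\zero$. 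Supposing $\BC{K(S)} \in \mathbf{BA}$ and applying this with $\BC{Y}=\BC{I}$ and $\BC{X}=\BC{K(S)}$ via the inequality established above yields $\BC{I} = \BC{I}\wedge\BC{K(S)} = \bigvee_{n\in S}\BC{I\wedge K(n)}$. Each smash $I\wedge K(n)\simeq *$ for finite $n$ (a standard property of the Brown--Comenetz dual), so the join vanishes, contradicting $\BC{I}\neq\zero$.
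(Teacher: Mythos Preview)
Your proposal is correct and follows essentially the same route as the paper: membership in $\mathbf{cBA}$ via Lemma~\ref{lem:curly}, the inequality $\BC{K(S)}\geq\BC{I}$ as an external chromatic input, and then the contradiction $\BC{I}=\BC{I}\wedge\BC{K(S)}=\bigvee_{n\in S}\BC{I\wedge K(n)}=\zero$ from the $\mathbf{BA}$ hypothesis. The only place you are vaguer than the paper is the inequality $\BC{K(S)}\geq\BC{I}$, which is also the only place the infiniteness of $S$ is used: the paper obtains it by citing Hovey's result \cite[Proof of Theorem~3.6]{hovey:chrom} that the mod~$p$ Moore spectrum $M(p)$ is $K(S)$-local for infinite $S$, so that $K(S)$ has a finite local, and then \cite[Proposition~7.2]{hp1999}.
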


\begin{proof}
By Lemma \ref{lem:curly}, $\BC{K(S)}$ is in $\mathbf{cBA}$. Hovey showed
\cite[Proof of Theorem 3.6]{hovey:chrom} that the mod-$p$ Moore
spectrum, $M(p)$ is $K(S)$-local, so in particular $K(S)$ has a finite
local and \cite[Proposition 7.2]{hp1999} gives that $\BC{K(S)} \geq
\BC{I}$. If $K(S)$ were in $\mathbf{BA}$, having a finite local implies
\cite[Lemma 7.9]{hp1999} that $\BC{K(S) \wedge I} \neq \zero$. But we
know that $\BC{K(n) \wedge I} = \zero$ and hence using distributivity we
get that $\BC{K(S) \wedge I} = \zero$.
\end{proof}

\begin{cor}\label{cor:2}

We have a proper inclusion $\mathbf{BA}\subsetneqq\mathbf{cBA}$; in
fact, the set $\mathbf{cBA}{\setminus}\mathbf{BA}$ has size continuum.

\end{cor}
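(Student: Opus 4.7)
The plan is to extract $2^{\aleph_0}$ many distinct Bousfield classes from Proposition~\ref{prop:kinfinite}. That proposition already hands us membership in $\mathbf{cBA}\setminus\mathbf{BA}$; the one thing left is to check that the assignment $S\mapsto\BC{K(S)}$ is injective on the infinite subsets of $\mathbb{N}$, and that there are continuum many such $S$.

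First I would recall the classical fact that the Morava $K$-theories at a fixed prime are pairwise Bousfield-orthogonal, i.e.\ $K(n)\wedge K(m)\simeq *$ whenever $n\neq m$ (this is standard; see for instance \cite{hp1999}). Given two distinct infinite subsets $S,T\subseteq\mathbb{N}$, pick $n\in S\bigtriangleup T$, say $n\in S\setminus T$. Then smashing $K(T)=\bigvee_{m\in T}K(m)$ with $K(n)$ gives $K(n)\wedge K(T)\simeq *$, so $K(n)\in\BC{K(T)}$, whereas $K(n)\wedge K(n)=K(n)\not\simeq *$, so $K(n)\notin\BC{K(S)}$. Hence $\BC{K(S)}\neq\BC{K(T)}$.

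Second, I would note that the collection of infinite subsets of $\mathbb{N}$ has cardinality $2^{\aleph_0}$ (all but countably many subsets of $\mathbb{N}$ are infinite). Combined with the injectivity above and Proposition~\ref{prop:kinfinite}, this produces $2^{\aleph_0}$ distinct members of $\mathbf{cBA}\setminus\mathbf{BA}$, which also witnesses the strict inclusion $\mathbf{BA}\subsetneqq\mathbf{cBA}$.

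I expect no real obstacle here: the only non-trivial ingredient is the Bousfield-orthogonality of the $K(n)$, which is already in the background used by the preceding results. Everything else is a cardinality count on $\mathcal{P}(\mathbb{N})$.
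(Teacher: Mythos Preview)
Your proposal is correct and follows essentially the same approach as the paper: invoke Proposition~\ref{prop:kinfinite} together with the injectivity of $S\mapsto\BC{K(S)}$ on infinite $S\subseteq\mathbb{N}$ to produce $2^{\aleph_0}$ distinct classes in $\mathbf{cBA}\setminus\mathbf{BA}$. The only cosmetic differences are that the paper cites \cite[Lemma~3.4]{dp2001} for the injectivity where you spell out the $K(n)$-orthogonality argument directly, and the paper's proof also includes the one-line verification of the inclusion $\mathbf{BA}\subseteq\mathbf{cBA}$ (via $A\BC{X}=ra\BC{X}=a\BC{X}$ for $\BC{X}\in\mathbf{BA}$, since $a\BC{X}\in\mathbf{BA}\subseteq\mathbf{DL}$), which you take as given from the introduction.
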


\begin{proof} Because $\mathbf{BA}$ is a Boolean algebra, 
$a\BC{X}\in\mathbf{BA}$ for elements 
$\BC{X}\in\mathbf{BA}\subseteq\mathbf{DL}$. Therefore, $A\BC{X} = 
ra\BC{X} = a\BC{X}$. But $a^2 = \mathrm{id}$, so ``$\subseteq$'' holds. 
For the non-equality, if $S \neq S'$ are infinite subsets of 
$\mathbb{N}$, then Dwyer and Palmieri showed that $\BC{K(S)} \neq 
\BC{K(S')}$ \cite[Lemma 3.4]{dp2001}, so there are continuum many 
elements in the complement. \end{proof}

To sum up, we have
$$\mathbf{BA}\subsetneqq \mathbf{cBA} \subseteq \mathbf{DL} \subsetneqq \mathbf{B}.$$
Hovey and Palmieri argue that the middle inclusion is also proper:
\begin{quote}
This argument also implies that $A^2$ is not the identity---indeed, if
$A^2$ were the identity, one can check that $A$ would have to convert
meets to joins. However, we do not know a specific spectrum $X$ in
$\mathbf{DL}$ for which $A^2\BC{X} \neq \BC{X}$. \cite[p.\ 185]{hp1999}
\end{quote}

We analyse the argument sketched in the above quote:

\begin{lem}\label{lem:andrew}
Let
$\mathcal{X}\subseteq\mathbf{DL}$ be any set such that 
$A^2$ is the identity for each $\BC{X}\in\mathcal{X}$ and for
$\bigvee\{A\BC{X}\,;\,\BC{X}\in\mathcal{X}\}$. Then
$$A(\bigcurlywedge\mathcal{X}) = \bigvee \{A\BC{X}\,;\,\BC{X}\in\mathcal{X}\}.$$
\end{lem}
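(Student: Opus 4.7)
The plan is to introduce the set $\mathcal{Y} := \{A\BC{X}\,;\,\BC{X}\in\mathcal{X}\}$ and apply the ``$A$ converts joins to meets'' identity to $\bigvee\mathcal{Y}$, then use the $A^2 = \mathrm{id}$ hypotheses twice to close the loop. This exactly mirrors what Hovey and Palmieri hint at in the quoted passage: the conclusion that $A$ sends meets to joins is a formal consequence, not a global truth, and should drop out as soon as one tracks where $A^2$ is applied.

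First, I would compute
$$A\bigl(\bigvee\mathcal{Y}\bigr) = \bigcurlywedge\{A^2\BC{X}\,;\,\BC{X}\in\mathcal{X}\}$$
by directly applying Lemma~6.2(d) of \cite{hp1999} as recalled in the excerpt. Then, invoking the first hypothesis that $A^2\BC{X} = \BC{X}$ for every $\BC{X}\in\mathcal{X}$, the right-hand side simplifies to $\bigcurlywedge\mathcal{X}$, giving
$$A\bigl(\bigvee\mathcal{Y}\bigr) = \bigcurlywedge\mathcal{X}.$$

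Next, I would apply $A$ once more to both sides, obtaining
$$A^2\bigl(\bigvee\mathcal{Y}\bigr) = A\bigl(\bigcurlywedge\mathcal{X}\bigr).$$
By the second hypothesis, $A^2$ fixes $\bigvee\mathcal{Y} = \bigvee\{A\BC{X}\,;\,\BC{X}\in\mathcal{X}\}$, so the left-hand side is simply $\bigvee\mathcal{Y}$, which yields exactly the desired equation.

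I don't anticipate any real obstacle: the argument is purely a two-line manipulation of the join-to-meet identity combined with the two given fixed-point hypotheses. The only thing to be careful about is to apply $A$ to $\bigvee\mathcal{Y}$ (and not to $\bigvee\mathcal{X}$) at the start, since it is $\mathcal{Y}$ whose members are already of the form $A(\cdot)$, so that the intermediate meet collapses via the hypothesis $A^2\BC{X} = \BC{X}$ for $\BC{X}\in\mathcal{X}$.
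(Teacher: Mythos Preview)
Your proof is correct and is essentially the same argument as the paper's: both use the join-to-meet identity applied to $\bigvee\{A\BC{X}\}$ together with the two $A^2=\mathrm{id}$ hypotheses, and differ only in that you run the chain of equalities from right to left while the paper runs it from left to right.
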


\begin{proof}
Since $A$ converts joins to meets, under the assumption 
of the lemma, we have
\begin{align*}
A(\bigcurlywedge\mathcal{X})
& = A\bigcurlywedge\{A^2\BC{X}\,;\,\BC{X}\in\mathcal{X}\}\\
& = A^2\bigvee \{A\BC{X}\,;\,\BC{X}\in\mathcal{X}\}\\
& = \bigvee \{A\BC{X}\,;\,\BC{X}\in\mathcal{X}\}.
\end{align*}
\end{proof}

\begin{cor}[Hovey-Palmieri] \label{cor:HP}
The operation $A^2$ is not the identity on $\mathbf{DL}$; \emph{i.e.},
$\mathbf{cBA}\subsetneqq \mathbf{DL}$.
\end{cor}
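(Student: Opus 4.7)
The plan is to argue by contradiction: suppose $A^2 = \mathrm{id}$ everywhere on $\mathbf{DL}$, and derive that $\BC{K(\mathbb{N})} \in \mathbf{BA}$, contradicting Proposition \ref{prop:kinfinite}. Under this assumption the hypotheses of Lemma \ref{lem:andrew} are automatic for every $\mathcal{X} \subseteq \mathbf{DL}$ (finite joins of elements of $\mathbf{DL}$ stay in $\mathbf{DL}$ by distributivity of $\wedge$ over $\vee$), so I am free to apply the lemma to any well-chosen two-element set.

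The key preparation is a pointwise inequality $A\BC{Y} \leq a\BC{Y}$ valid for every $\BC{Y} \in \mathbf{DL}$: since $A\BC{Y} = ra\BC{Y}$ and the retraction $r$ always returns an element below its input, this is immediate, and combining it with $\BC{Y} \wedge a\BC{Y} = \zero$ yields $\BC{Y} \wedge A\BC{Y} = \zero$. Setting $\BC{Y} := \BC{K(\mathbb{N})}$, I would apply Lemma \ref{lem:andrew} to
\[ \mathcal{X} := \{\BC{K(\mathbb{N})},\, A\BC{K(\mathbb{N})}\} \subseteq \mathbf{DL}. \]
The meet side collapses: $\bigcurlywedge \mathcal{X} = \BC{Y} \wedge A\BC{Y} = \zero$ (on $\mathbf{DL}$, $\wedge$ and $\curlywedge$ coincide for finite meets), so $A(\bigcurlywedge \mathcal{X}) = A(\zero) = \one$. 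The join side, using $A^2 = \mathrm{id}$, is $A\BC{K(\mathbb{N})} \vee A^2\BC{K(\mathbb{N})} = A\BC{K(\mathbb{N})} \vee \BC{K(\mathbb{N})}$. The lemma therefore forces $\BC{K(\mathbb{N})} \vee A\BC{K(\mathbb{N})} = \one$, and chaining once more with $A\BC{K(\mathbb{N})} \leq a\BC{K(\mathbb{N})}$ gives $\BC{K(\mathbb{N})} \vee a\BC{K(\mathbb{N})} = \one$, i.e., $\BC{K(\mathbb{N})} \in \mathbf{BA}$, contradicting Proposition \ref{prop:kinfinite}.

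The main obstacle I expect is the choice of $\mathcal{X}$: the Hovey--Palmieri quote phrases the matter abstractly as ``$A$ would have to convert meets to joins,'' but to extract a concrete contradiction one has to feed Lemma \ref{lem:andrew} a set whose meet collapses to $\zero$ while the corresponding join reconstructs an instance of Boolean complementation. Pairing a class in $\mathbf{cBA} \setminus \mathbf{BA}$ with its $A$-image is the natural two-element choice, and once $\mathcal{X}$ is fixed each verification is a one-line appeal to material already in place: Proposition \ref{prop:kinfinite} supplies a specific element of $\mathbf{cBA} \setminus \mathbf{BA}$, and the inequality $A \leq a$ on $\mathbf{DL}$ is just unwinding the definition of the retraction.
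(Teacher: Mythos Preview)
Your argument is correct, but it differs from the paper's. The paper applies Lemma~\ref{lem:andrew} to the pair $\mathcal{X}=\{\BC{K(\mathbb{N})},\BC{H\mathbb{F}_p}\}$: since $\BC{K(\mathbb{N})}\wedge\BC{H\mathbb{F}_p}=\zero$ one gets $A(\bigcurlywedge\mathcal{X})=\one$, while on the other side both classes dominate $\BC{I}$, so $A\BC{K(\mathbb{N})}\vee A\BC{H\mathbb{F}_p}\leq a\BC{I}<\one$, giving the contradiction directly. Your choice $\mathcal{X}=\{\BC{K(\mathbb{N})},A\BC{K(\mathbb{N})}\}$ instead yields $\BC{K(\mathbb{N})}\vee A\BC{K(\mathbb{N})}=\one$ and then, via $A\leq a$, forces $\BC{K(\mathbb{N})}\in\mathbf{BA}$, contradicting Proposition~\ref{prop:kinfinite}.

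Your route is in a sense more conceptual: the same computation with an arbitrary $\BC{Y}\in\mathbf{DL}$ in place of $\BC{K(\mathbb{N})}$ shows that $A^2=\mathrm{id}$ on $\mathbf{DL}$ would collapse $\mathbf{DL}$ all the way down to $\mathbf{BA}$, so the corollary reduces cleanly to the already established $\mathbf{BA}\subsetneqq\mathbf{cBA}$, and you never need the inequality $\BC{H\mathbb{F}_p}\geq\BC{I}$. The paper's choice, by contrast, puts the two concrete spectra $K(\mathbb{N})$ and $H\mathbb{F}_p$ on stage from the start; this is what produces the trichotomy (one of $\BC{K(\mathbb{N})}$, $\BC{H\mathbb{F}_p}$, $A\BC{K(\mathbb{N})}\vee A\BC{H\mathbb{F}_p}$ lies outside $\mathbf{cBA}$) that the paper subsequently sharpens into the Dichotomy Lemma and the identification of $\BC{H\mathbb{F}_p}$ as an explicit inhabitant of $\mathbf{DL}\setminus\mathbf{cBA}$. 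Your pair does not point toward $H\mathbb{F}_p$ in the same way.
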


\begin{proof} Let $X := K(\mathbb{N})$, $Y := H\mathbb{F}_p = 
K(\infty)$, and $\mathcal{X} := \{X,Y\} \subseteq \mathbf{DL}$. We 
assume towards a contradiction that $A^2$ is the identity on 
$\mathbf{DL}$, so in particular, the assumptions of Lemma 
\ref{lem:andrew} are satisfied for $\mathcal{X}$. But 
$\BC{X}\curlywedge\BC{Y} = \BC{X}\wedge\BC{Y} = \zero$, hence 
$A(\BC{X}\curlywedge\BC{Y}) = \one$. On the other hand, $A\BC{X}\vee 
A\BC{Y} \leq a\BC{I} < \one$, in contradiction to Lemma 
\ref{lem:andrew}. \end{proof}

The proof of Corollary \ref{cor:HP} due to Hovey and Palmieri yields a 
trichotomy result: at least one of $\BC{K(\mathbb{N})}$, 
$\BC{H\mathbb{F}_p}$, and $A\BC{K(\mathbb{N})}\vee A\BC{H\mathbb{F}_p}$ 
is not in $\mathbf{cBA}$. We improve this in our Dichotomy Lemma 
\ref{lem:dich} to a dichotomy which will allow us to identify concrete 
elements in $\mathbf{DL}{\setminus}\mathbf{cBA}$.

\begin{lem}\label{lem:zeroone}
For any spectrum, the condition $A\BC{E} < \one$ is equivalent to
$\BC{E} \neq \zero$.
\end{lem}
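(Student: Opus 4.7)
The plan is to prove both directions by contrapositive, showing that $A\BC{E} = \one$ if and only if $\BC{E} = \zero$. Since $\one$ is the top of $\mathbf{B}$, the condition $A\BC{E} < \one$ is just $A\BC{E} \neq \one$.

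For the easy direction, suppose $\BC{E} = \zero$. Then $Z \wedge E \simeq \ast$ for every spectrum $Z$, so directly from the definition $a\BC{E} = \one$. Since $\one = \BC{S}$ is in $\mathbf{DL}$ (the sphere is a ring spectrum, so $\BC{S} \wedge \BC{S} = \BC{S}$), the retraction $r$ fixes $\one$, and hence $A\BC{E} = r a \BC{E} = r\one = \one$.

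For the nontrivial direction, I would proceed as follows. First, I would record the general observation that $r\BC{X} \leq \BC{X}$ for every $\BC{X} \in \mathbf{B}$: by definition $r\BC{X}$ is a join, taken in the complete lattice $(\mathbf{B},\curlywedge,\vee)$, of classes already bounded above by $\BC{X}$, so the join itself is bounded above by $\BC{X}$. Applying this with $\BC{X} = a\BC{E}$, the assumption $A\BC{E} = \one$ yields $\one = r a \BC{E} \leq a\BC{E}$, forcing $a\BC{E} = \one$. Now use the identity $a\BC{X} \wedge \BC{X} = \zero$ recalled in Section 2: substituting $\BC{X} = \BC{E}$ gives $\zero = a\BC{E} \wedge \BC{E} = \one \wedge \BC{E} = \BC{E}$, since smashing with the sphere is the identity on Bousfield classes.

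There is no real obstacle here; the only point deserving a line of justification is the inequality $r\BC{X} \leq \BC{X}$, which is not explicitly stated earlier but is immediate from the definition of $r$ and the fact that suprema are taken in $\mathbf{B}$.
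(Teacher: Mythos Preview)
Your proof is correct and follows essentially the same route as the paper: both directions are argued by contrapositive, with the key step for the nontrivial direction being the inequality $A\BC{E}=ra\BC{E}\leq a\BC{E}$ (which the paper writes as $a\BC{E}\geq A\BC{E}$) followed by $\BC{E}=\one\wedge\BC{E}=a\BC{E}\wedge\BC{E}=\zero$. Your version is simply more explicit about why $r\BC{X}\leq\BC{X}$ and why $r\one=\one$, but the argument is the same.
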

\begin{proof}
If $\BC{E}=\zero$, then clearly $A\BC{E} = \one$. Conversely, if 
$A\BC{E} = \one$, then $a\BC{E} \geq A\BC{E} = \one$, and so 
$$\BC{E} = \one \wedge \BC{E} = a\BC{E} \wedge \BC{E} = \zero.$$
\end{proof}

\begin{lem}[Dichotomy Lemma]\label{lem:dich}
Let $X$ and $Y$ be spectra, and let $E$ be a spectrum
such that $\BC{E} \neq \zero$. Suppose that the following conditions hold:
\begin{enumerate}
\item $\BC{X}\in\mathbf{DL}$,
\item $\BC{Y}\in\mathbf{DL}$,
\item $\BC{X}\wedge\BC{Y} = \zero$,
\item $\BC{E}\leq \BC{X}$, and
\item $\BC{E}\leq \BC{Y}$.
\end{enumerate}
Then $\BC{X}$ or $\BC{Y}$ is not in $\mathbf{cBA}$.
\end{lem}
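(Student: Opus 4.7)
The plan is to argue by contradiction, assuming that both $\BC{X}$ and $\BC{Y}$ lie in $\mathbf{cBA}$ and using hypotheses (3), (4), (5) to derive an inconsistency. The first move is to observe that, by (4) and (5), $\BC{E}$ is a common lower bound of $\BC{X}$ and $\BC{Y}$ in the Bousfield lattice; since $\curlywedge$ is the greatest lower bound in $\mathbf{B}$, this forces $\BC{E} \leq \BC{X} \curlywedge \BC{Y}$, and the hypothesis $\BC{E} \neq \zero$ then gives $\BC{X} \curlywedge \BC{Y} \neq \zero$.

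The second step is to show that, under the assumption $\BC{X},\BC{Y} \in \mathbf{cBA}$, the class $\BC{X} \curlywedge \BC{Y}$ itself lies in $\mathbf{cBA}$. Using that $A$ converts joins to meets together with $A^2\BC{X} = \BC{X}$ and $A^2\BC{Y} = \BC{Y}$, one computes
$$A(A\BC{X} \vee A\BC{Y}) = A^2\BC{X} \curlywedge A^2\BC{Y} = \BC{X} \curlywedge \BC{Y},$$
exhibiting $\BC{X} \curlywedge \BC{Y}$ as an element in the image of $A$. Since $A^3 = A$, every such image is fixed by $A^2$, so $\BC{X} \curlywedge \BC{Y} \in \mathbf{cBA}$.

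The third step pits the two kinds of meet against each other inside $\mathbf{cBA}$. The complete Boolean algebra structure $(\mathbf{cBA},\wedge,\curlyvee,\zero,\one,A)$ has Boolean meet $\wedge$, so the greatest lower bound of $\BC{X}, \BC{Y}$ computed within $\mathbf{cBA}$ is $\BC{X} \wedge \BC{Y} = \zero$ by hypothesis (3). But step 2 showed that $\BC{X} \curlywedge \BC{Y}$ is also a lower bound of $\BC{X}, \BC{Y}$ lying in $\mathbf{cBA}$, so $\BC{X} \curlywedge \BC{Y} \leq \BC{X} \wedge \BC{Y} = \zero$, contradicting step 1. The main conceptual obstacle, and what is really being exploited, is the distinction between the smash product $\wedge$ (the Boolean meet on $\mathbf{cBA}$) and the honest lattice meet $\curlywedge$ in all of $\mathbf{B}$: the nonzero class $\BC{E}$ witnesses that $\BC{X} \curlywedge \BC{Y}$ computed in $\mathbf{B}$ strictly exceeds $\BC{X} \wedge \BC{Y} = \zero$, and this gap cannot coexist with both $\BC{X}$ and $\BC{Y}$ belonging to the Boolean algebra $\mathbf{cBA}$.
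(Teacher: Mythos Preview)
Your Step 2 contains a genuine gap: the identity ``$A$ converts joins to meets'' yields
\[
A(A\BC{X}\vee A\BC{Y}) \;=\; A^{2}\BC{X}\wedge A^{2}\BC{Y},
\]
where the right-hand side is the meet in $\mathbf{DL}$, i.e.\ the smash product $\wedge$, \emph{not} the lattice meet $\curlywedge$ of the ambient Bousfield lattice $\mathbf{B}$. (Concretely, $A(U\vee V)=r\bigl(aU\curlywedge_{\mathbf{B}}aV\bigr)$, and for $W\in\mathbf{DL}$ one has $W\leq aU$ iff $W\leq raU$; unwinding this gives $A(U\vee V)=AU\wedge AV$.) Under your standing assumptions this computation therefore gives $A(A\BC{X}\vee A\BC{Y})=\BC{X}\wedge\BC{Y}=\zero$, which says nothing about $\BC{X}\curlywedge_{\mathbf{B}}\BC{Y}$. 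Indeed, hypotheses (3)--(5) themselves show that here $\BC{X}\wedge\BC{Y}=\zero$ while $\BC{X}\curlywedge_{\mathbf{B}}\BC{Y}\geq\BC{E}>\zero$, so these two meets are genuinely different for the very pair under consideration, and you have not exhibited $\BC{X}\curlywedge_{\mathbf{B}}\BC{Y}$ as lying in the image of $A$. Consequently Step 3 cannot get started.

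The paper's argument proceeds differently: from $A(\BC{X}\wedge\BC{Y})=A(\zero)=\one$ and the (correct) identity $A^{2}\BC{X}\wedge A^{2}\BC{Y}=A(A\BC{X}\vee A\BC{Y})$ one obtains $\one=A^{2}(A\BC{X}\vee A\BC{Y})=A\BC{X}\curlyvee A\BC{Y}$. Then the order-reversing property of $A$ applied to (4) and (5), together with monotonicity of $\curlyvee$, gives $\one\leq A\BC{E}\curlyvee A\BC{E}=A\BC{E}<\one$, the final strict inequality coming from Lemma~\ref{lem:zeroone}. Your diagnosis that the lemma ``extracts the failure of $A^{2}=\mathrm{id}$ from the discrepancy between $\curlywedge$ and $\wedge$'' is exactly right; what is missing is a valid bridge, and the paper builds it on the $\curlyvee$-side rather than by trying to place $\BC{X}\curlywedge_{\mathbf{B}}\BC{Y}$ inside $\mathbf{cBA}$.
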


Note that conditions (4) and (5) are equivalent to saying that $\BC{X} 
\curlywedge \BC{Y} \neq \zero$, and thus the Dichotomy Lemma extracts 
the failure of $A^2 = \mathrm{id}$ from the discrepancy between 
$\curlywedge$ and $\wedge$ in $\mathbf{B}$.

\begin{proof}
Assume that $A^2\BC{X} = \BC{X}$ and $A^2\BC{Y} = \BC{Y}$.
Since $A$ converts joins to meets, we get by our assumption on
$X$ and $Y$
$$\one = A\zero = A(\BC{X}\wedge \BC{Y})
= A(A^2\BC{X} \wedge A^2\BC{Y}) = A^2(A\BC{X} \vee A\BC{Y})$$
and the latter is $A\BC{X} \curlyvee A\BC{Y}$ by definition of $\curlyvee$.
As $A$ is order-reversing we get $A\BC{X} \leq A\BC{E}$ and $A\BC{Y} \leq
A\BC{E}$ and hence (using Lemma \ref{lem:zeroone})
$$ \one = A^2(A\BC{X} \vee A\BC{Y})
= A\BC{X} \curlyvee A\BC{Y} \leq A\BC{E} \curlyvee  A\BC{E}
= A\BC{E} < \one,$$
a contradiction, showing that our assumption that both $\BC{X}$ and $\BC{Y}$
are in $\mathbf{cBA}$ cannot hold.
\end{proof}

As usual, we call a set 
$S \subset \mathbb{N} \cup \{\infty\}$ \emph{coinfinite},
if its
complement $(\mathbb{N} \cup \{\infty\}) {\setminus} S$ is infinite.

\begin{thm}\label{thm:notcBA}
For any coinfinite set
$S \subseteq \mathbb{N} \cup \{\infty\}$ with 
$\infty \in S$, we have that 
$\BC{K(S)}$ is not in $\mathbf{cBA}$.
\end{thm}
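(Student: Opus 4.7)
My plan is to apply the Dichotomy Lemma \ref{lem:dich} with $X := K(S)$, $Y := K(S^c)$ where $S^c := (\mathbb{N} \cup \{\infty\}) \setminus S$, and $E := I$, the Brown--Comenetz dual of the $p$-local sphere. The clever choice here is the partner $Y$: letting all Morava $K$-theories not appearing in $S$ serve as $Y$ should force the smash product $\BC{X} \wedge \BC{Y}$ to vanish while the meet $\BC{X} \curlywedge \BC{Y}$ remains nontrivial (witnessed by $\BC{I}$).

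Because $S$ is coinfinite and contains $\infty$, the complement $S^c$ is an infinite subset of $\mathbb{N}$, so Proposition \ref{prop:kinfinite} immediately supplies both $\BC{K(S^c)} \in \mathbf{cBA} \subseteq \mathbf{DL}$ and $\BC{I} \leq \BC{K(S^c)}$. To verify $\BC{K(S)} \in \mathbf{DL}$, I would observe that each summand $K(s)$ is a ring spectrum, hence in $\mathbf{DL}$, and that $\mathbf{DL}$ is closed under arbitrary joins (by distributivity of $\wedge$ over $\vee$ together with the idempotence of each $\BC{K(s)}$). To verify $\BC{I} \leq \BC{K(S)}$, I would chain $\BC{I} \leq \BC{H\mathbb{F}_p} \leq \BC{K(S)}$: the second inequality holds because $\infty \in S$ makes $H\mathbb{F}_p$ a wedge summand of $K(S)$, while the first follows from \cite[Proposition 7.2]{hp1999} since $M(p)$ is a finite $H\mathbb{F}_p$-local spectrum.

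The main obstacle is the smash vanishing $\BC{K(S)} \wedge \BC{K(S^c)} = \zero$. Distributivity reduces this to the claim $K(i) \wedge K(j) \simeq *$ for every $i \in S$ and $j \in S^c$; disjointness of $S$ and $S^c$ guarantees $i \neq j$ in all cases. For $i$ and $j$ both finite this is the classical orthogonality of distinct Morava $K$-theories, and the remaining case $i = \infty$ with $j < \infty$ is the vanishing $H\mathbb{F}_p \wedge K(j) \simeq *$ already invoked in the proof of Corollary \ref{cor:HP}.

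With all five hypotheses of Lemma \ref{lem:dich} in place, the Dichotomy Lemma forces one of $\BC{K(S)}, \BC{K(S^c)}$ out of $\mathbf{cBA}$. Since Proposition \ref{prop:kinfinite} already places $\BC{K(S^c)}$ inside $\mathbf{cBA}$, the offending class must be $\BC{K(S)}$, as desired. This sharpens Corollary \ref{cor:HP} by pinpointing which of the candidate classes leaves $\mathbf{cBA}$, and in particular (taking $S = \{\infty\}$) yields $\BC{H\mathbb{F}_p} \in \mathbf{DL} \setminus \mathbf{cBA}$.
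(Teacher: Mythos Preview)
Your proposal is correct and follows essentially the same route as the paper's proof: apply the Dichotomy Lemma with $X=K(S)$, $Y=K(\overline{S})$, and $E=I$, then use Proposition~\ref{prop:kinfinite} to certify that $\BC{K(\overline{S})}\in\mathbf{cBA}$, forcing $\BC{K(S)}\notin\mathbf{cBA}$. The only cosmetic differences are that the paper cites \cite[Lemma~7.1(c)]{hp1999} directly for $\BC{I}\leq\BC{H\mathbb{F}_p}$ rather than going through Proposition~7.2 with $M(p)$, and it invokes Corollary~\ref{cor:2} (which is just a repackaging of Proposition~\ref{prop:kinfinite}) for $\BC{K(\overline{S})}\in\mathbf{cBA}$.
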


\begin{proof}
In Lemma \ref{lem:dich}, choose $E$ to be the Brown-Comenetz dual of the
$p$-local sphere spectrum, $I$. We know by \cite [Lemma 7.1(c)]{hp1999}
that  $\BC{H\mathbb{F}_p}\geq \BC{I}$, and hence 
$\BC{K(S)}\geq\BC{I}$.
As the complement $\overline{S}:=(\mathbb{N}\cup\{\infty\}){\setminus}S$ is
infinite,  we get by Proposition \ref{prop:kinfinite} that
$\BC{K(\overline{S})} \geq \BC{I}$.
Both, $\BC{K(S)}$   and
$\BC{K(\overline{S})}$ are in $\mathbf{DL}$ and $\BC{K(S)} \wedge \BC{K(\overline{S})} = \zero$.
Thus all conditions of the Dichotomy Lemma are satisfied, and we get that
one of $\BC{K(S)}$ and $\BC{K(\overline{S})}$ is not in $\mathbf{cBA}$. 
However, by Corollary \ref{cor:2}, 
$\BC{K(\overline{S})}\in\mathbf{cBA}$, so $\BC{K(S)}\in\mathbf{DL} 
{\setminus} \mathbf{cBA}$. \end{proof}

\begin{cor} There are at least $2^{\aleph_0}$ Bousfield classes in 
$\mathbf{DL} {\setminus} \mathbf{cBA}$. \end{cor}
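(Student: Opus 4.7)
The plan is to apply Theorem \ref{thm:notcBA} to a continuum-sized family of coinfinite subsets of $\mathbb{N}\cup\{\infty\}$ containing $\infty$, and then verify injectivity of the resulting map into $\mathbf{DL}{\setminus}\mathbf{cBA}$. The natural family to use is $\{S_T := T\cup\{\infty\}\,;\,T\subseteq\mathbb{N}\text{ infinite and coinfinite}\}$: each such $S_T$ satisfies the hypotheses of Theorem \ref{thm:notcBA} (its complement in $\mathbb{N}\cup\{\infty\}$ is the infinite set $\mathbb{N}{\setminus}T$), so each $\BC{K(S_T)}$ lies in $\mathbf{DL}{\setminus}\mathbf{cBA}$. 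The index family $\{T\subseteq\mathbb{N}\,;\,T\text{ infinite and coinfinite}\}$ has cardinality $2^{\aleph_0}$, so once $T\mapsto \BC{K(S_T)}$ is shown to be injective the corollary follows.

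For the injectivity I would reuse the smash-orthogonality idea underlying the Dwyer--Palmieri distinctness result cited in Corollary \ref{cor:2}. Given distinct $T,T'$, pick some $n$ in the symmetric difference; say $n\in T{\setminus}T'$. Smashing the wedge defining $K(S_{T'})$ against $K(n)$ and using the pairwise orthogonality of the Morava $K$-theories together with $\BC{K(n)\wedge H\mathbb{F}_p}=\zero$ (which appears in the proof of Corollary \ref{cor:HP}, since $\BC{K(n)}\leq\BC{K(\mathbb{N})}$ and $\BC{K(\mathbb{N})}\wedge\BC{H\mathbb{F}_p}=\zero$), one obtains $\BC{K(S_{T'})}\wedge\BC{K(n)}=\zero$. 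On the other hand $\BC{K(n)}\leq\BC{K(S_T)}$ since $K(n)$ is a wedge summand of $K(S_T)$, and $\BC{K(n)}\wedge\BC{K(n)}=\BC{K(n)}\neq\zero$ because $K(n)$ is a (nontrivial) ring spectrum. Smashing with $\BC{K(n)}$ therefore separates the two Bousfield classes.

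I do not expect a genuine obstacle. The substantive topological work — placing each $\BC{K(S_T)}$ into $\mathbf{DL}{\setminus}\mathbf{cBA}$ — is already carried out by Theorem \ref{thm:notcBA}, and the remaining distinctness argument is a direct use of smash-orthogonality relations that have already been established in the paper. The only mild wrinkle is that $\infty$ now appears in every index set, so the Dwyer--Palmieri statement cannot be quoted verbatim; however, since the orthogonality $\BC{K(n)\wedge H\mathbb{F}_p}=\zero$ is available, the same argument transfers with no real difficulty, so this is a cosmetic issue rather than a true obstacle.
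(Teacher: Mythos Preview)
Your proposal is correct and follows essentially the same approach as the paper: invoke Theorem~\ref{thm:notcBA} for a continuum-sized family of coinfinite sets containing $\infty$, and then appeal to the distinctness of the resulting Bousfield classes. The only difference is cosmetic---the paper simply cites \cite[Lemma~3.4]{dp2001} for the distinctness, whereas you spell out the smash-orthogonality argument (and needlessly restrict to infinite $T$, which is harmless).
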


\begin{proof} This follows directly from Theorem \ref{thm:notcBA} and 
\cite[Lemma 3.4]{dp2001}, as there are $2^{\aleph_0}$ many coinfinite 
subsets of $\mathbb{N} \cup \{\infty\}$. \end{proof}

\section{Applications}

Several conjectures made by Hovey and Palmieri in \cite{hp1999} suggest 
that $\BC{H\mathbb{F}_p}$ is not in $\mathbf{cBA}$ \cite[Proposition 
6.14]{hp1999}. This follows directly from our Theorem \ref{thm:notcBA}:

\begin{cor} For every prime $p$, we have that $\BC{H\mathbb{F}_p}\in 
\mathbf{DL} {\setminus} \mathbf{cBA}$. \end{cor}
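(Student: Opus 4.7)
The plan is to derive this corollary as an immediate instantiation of Theorem \ref{thm:notcBA}. Concretely, I would take $S = \{\infty\} \subseteq \mathbb{N} \cup \{\infty\}$. This choice plainly contains $\infty$, and its complement in $\mathbb{N} \cup \{\infty\}$ is all of $\mathbb{N}$, which is infinite, so $S$ is coinfinite as required.

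Next I would unfold the notation: by the paper's convention $K(\infty) = H\mathbb{F}_p$, and $K(\{\infty\}) = \bigvee_{n \in \{\infty\}} K(n) = K(\infty) = H\mathbb{F}_p$. Therefore $\BC{K(S)} = \BC{H\mathbb{F}_p}$. Invoking Theorem \ref{thm:notcBA} at this $S$ yields $\BC{H\mathbb{F}_p} \notin \mathbf{cBA}$. Membership in $\mathbf{DL}$ is already recorded earlier in the paper, since $H\mathbb{F}_p$ is a ring spectrum and ring spectra have Bousfield classes in $\mathbf{DL}$ (alternatively, this is already built into the statement of Theorem \ref{thm:notcBA}, whose proof concludes with $\BC{K(S)} \in \mathbf{DL} \setminus \mathbf{cBA}$).

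There is really no obstacle to overcome here: all of the content has been loaded into Theorem \ref{thm:notcBA}. The only point worth flagging is the (notationally trivial) observation that the single Morava $K$-theory spectrum $H\mathbb{F}_p = K(\infty)$ can be written as $K(S)$ for the singleton $S = \{\infty\}$, so the coinfiniteness hypothesis is satisfied as cheaply as possible.
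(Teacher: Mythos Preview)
Your proposal is correct and matches the paper's proof essentially verbatim: the paper likewise takes $S=\{\infty\}$, notes it is coinfinite with $\infty\in S$, and invokes Theorem~\ref{thm:notcBA} via the identification $\BC{H\mathbb{F}_p}=\BC{K(\infty)}=\BC{K(\{\infty\})}$.
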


\begin{proof} This is clear from Theorem \ref{thm:notcBA}, as 
$\BC{H\mathbb{F}_p} = \BC{K(\infty)} = \BC{K(\{\infty\})}$ where 
$\{\infty\}$ is coinfinite in $\mathbb{N}\cup\{\infty\}$.\end{proof}

Our method also 
identifies several other explicit 
Bousfield classes in $\mathbf{DL} {\setminus} \mathbf{cBA}$.
The following examples exploit the fact that for any self-map of a spectrum
$X$, $f \colon \Sigma^{|f|}X \rightarrow X$ one gets by \cite[Lemma 1.34]{ravenel}
that
$$ \BC{X} = \BC{C_f} \vee \BC{X[f^{-1}]}. $$
Here, $C_f$ denotes the cofiber of $f$ and $X[f^{-1}]$ is the telescope.
Then the 
Bousfield
class of the Eilenberg-MacLane spectrum of the $p$-local integers,
$H\mathbb{Z}_{(p)}$, is $\BC{K(\{0,\infty\})}$.
This 
is a special case of a truncated Brown-Peterson spectrum
$BP\BC{n}$ with $\pi_*(BP\BC{n}) = \mathbb{Z}_{(p)}[v_1, \ldots, v_n]$
($|v_i|= 2p^i-2$). Multiplication by $v_n$ is a self-map on $BP\BC{n}$ with
cofiber $BP\BC{n-1}$ and $BP\BC{n}[v_n^{-1}] = E(n)$. An iteration then gives
(cf.\ \cite[Theorem 2.1]{ravenel})
$\BC{BP\BC{n}} = \BC{E(n)} \vee \BC{H\mathbb{F}_p}$.
As the Bousfield class of $E(n)$ is $\BC{K(0)} \vee \ldots \vee \BC{K(n)}$
we obtain
$\BC{BP\BC{n}} = \BC{K(\{0,\ldots,n,\infty\})}$.

\begin{cor} For every prime $p$ and every natural number $n$, we have 
that $\BC{H\mathbb{Z}_{(p)}}$ and $\BC{BP\BC{n}}$ are in $\mathbf{DL} 
{\setminus} \mathbf{cBA}.$ \end{cor}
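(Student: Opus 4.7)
The plan is to observe that both Bousfield classes in question are of the form $\BC{K(S)}$ for a set $S \subseteq \mathbb{N} \cup \{\infty\}$ that contains $\infty$ and is coinfinite, and then invoke Theorem~\ref{thm:notcBA} directly.

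First I would extract the two identifications furnished by the lead-in to the corollary: $\BC{H\mathbb{Z}_{(p)}} = \BC{K(\{0,\infty\})}$, and, by iterating the Ravenel cofibre identity $\BC{BP\BC{k}} = \BC{BP\BC{k-1}} \vee \BC{E(k)}$ down from $k=n$ and using $\BC{E(k)} = \BC{K(0)} \vee \cdots \vee \BC{K(k)}$, $\BC{BP\BC{n}} = \BC{K(\{0,1,\ldots,n,\infty\})}$. In both cases $\infty$ lies in $S$, and the complement of $S$ in $\mathbb{N}\cup\{\infty\}$ is $\{1,2,3,\ldots\}$ or $\{n+1,n+2,\ldots\}$ respectively, each of which is infinite, so $S$ is coinfinite.

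Second, for $\mathbf{DL}$-membership I would appeal to the fact, recorded in the introduction, that the Bousfield class of any ring spectrum lies in $\mathbf{DL}$; this applies both to $H\mathbb{Z}_{(p)}$ and to $BP\BC{n}$. (Equivalently, $\BC{K(S)}$ is the join of the finitely many $\BC{K(i)} \in \mathbf{BA} \subseteq \mathbf{DL}$ for $i \in S \cap \mathbb{N}$ together with $\BC{H\mathbb{F}_p} \in \mathbf{DL}$, and $\mathbf{DL}$ is closed under arbitrary joins since it is a complete lattice.) The failure to lie in $\mathbf{cBA}$ is then immediate from Theorem~\ref{thm:notcBA} applied to the coinfinite set $S$.

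I do not anticipate any genuine obstacle: the corollary really is just a matter of matching hypotheses. All of the substance has already been delivered---the cofibre-identity computation of $\BC{H\mathbb{Z}_{(p)}}$ and $\BC{BP\BC{n}}$ supplied in the discussion preceding the statement, and the Dichotomy-Lemma argument inside Theorem~\ref{thm:notcBA}, which uses the Brown-Comenetz dual $I$ as a common lower bound to witness $\BC{K(S)} \curlywedge \BC{K(\overline{S})} \neq \zero$.
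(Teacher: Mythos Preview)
Your proposal is correct and matches the paper's own proof: both simply note that $\{0,\infty\}$ and $\{0,\ldots,n,\infty\}$ are coinfinite subsets of $\mathbb{N}\cup\{\infty\}$ containing $\infty$, so Theorem~\ref{thm:notcBA} applies. Your added remarks on $\mathbf{DL}$-membership are fine but not strictly needed, since the proof of Theorem~\ref{thm:notcBA} already establishes $\BC{K(S)}\in\mathbf{DL}$.
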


\begin{proof}The subsets $\{0,\infty\}$ and 
$\{0,\ldots,n,\infty\}$ are coinfinite in $\mathbb{N}\cup\{\infty\}$. 
\end{proof}

For the connective Morava $K$-theory $k(n)$ (with $\pi_*k(n) =
\mathbb{F}_p[v_n]$) we get
$ \BC{k(n)} = \BC{K(n)} \vee \BC{H\mathbb{F}_p}
= \BC{K(\{n,\infty\})}$.

\begin{cor}
For every natural number $n$, 
$\BC{k(n)}\in\mathbf{DL} {\setminus}
\mathbf{cBA}$.
\end{cor}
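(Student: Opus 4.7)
The plan is to reduce the claim directly to Theorem \ref{thm:notcBA}, since the paragraph immediately preceding the corollary has already done essentially all the work. Specifically, the identification $\BC{k(n)} = \BC{K(n)} \vee \BC{H\mathbb{F}_p} = \BC{K(\{n,\infty\})}$ is recorded there, so only the hypotheses of Theorem \ref{thm:notcBA} need to be checked for $S = \{n,\infty\}$.

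First I would observe that $\infty \in S$ by construction. Next I would note that the complement of $S$ in $\mathbb{N}\cup\{\infty\}$ is $\mathbb{N}\setminus\{n\}$, which is infinite, so $S$ is coinfinite. With both hypotheses verified, Theorem \ref{thm:notcBA} immediately yields $\BC{K(S)} \in \mathbf{DL}\setminus\mathbf{cBA}$, and substituting the identification $\BC{k(n)} = \BC{K(S)}$ gives the conclusion.

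There is essentially no obstacle here: the real content was packaged into Theorem \ref{thm:notcBA} (via the Dichotomy Lemma) and into the computation $\BC{k(n)} = \BC{K(\{n,\infty\})}$ obtained from the cofiber sequence associated to multiplication by $v_n$ on $k(n)$. The corollary is just the instantiation $S = \{n,\infty\}$, analogous to the preceding corollary where $S = \{\infty\}$ yielded $H\mathbb{F}_p$ and $S = \{0,\ldots,n,\infty\}$ yielded $BP\langle n\rangle$.
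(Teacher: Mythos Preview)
Your proposal is correct and follows exactly the paper's approach: apply Theorem \ref{thm:notcBA} to $S=\{n,\infty\}$, using the identification $\BC{k(n)}=\BC{K(\{n,\infty\})}$ established just before the corollary. The paper's proof is in fact terser than yours, simply noting that $\{n,\infty\}$ is coinfinite in $\mathbb{N}\cup\{\infty\}$.
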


\begin{proof}This follows from Theorem \ref{thm:notcBA}, as $\{n,\infty\}$ is
coinfinite in $\mathbb{N}\cup\{\infty\}$. 
\end{proof}

Similar to the Morava $K$-theory spectra $K(n)$ we can consider the telescopes
$T(n)$ of $v_n$-maps. (Cf.\ \cite[\S 5]{hp1999} for details.) It is known that
$$ \BC{T(n)} = \BC{K(n)} \vee \BC{A(n)}$$
where $A(n)$ is the spectrum describing the failure of the telescope
conjecture. We set $\BC{T(\infty)} = \BC{H\mathbb{F}_p}$.
The classes $\BC{T(n)}$ and $\BC{A(n)}$ are in $\mathbf{BA}$ but
$\bigvee_{\mathbb{N}} \BC{T(n)} \notin \mathbf{BA}$ by
\cite[Corollary 7.10]{hp1999}. By Lemma \ref{lem:curly}, we know that
for any $S\subseteq\mathbb{N}$, we have that 
$\bigvee_{n\in S} \BC{T(n)} \in \mathbf{cBA}.$
An argument similar to the proof of Proposition
\ref{prop:kinfinite} yields Proposition \ref{prop:tinfinite}.
\begin{prop} \label{prop:tinfinite}
If $S \subseteq \mathbb{N}$ is infinite, then $\BC{T(S)} =
\bigvee_{i\in S} \BC{T(i)} \in \mathbf{cBA}{\setminus}\mathbf{BA}$ and
$\BC{T(S)} \geq \BC{I}$.
\end{prop}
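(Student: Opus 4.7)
My plan is to mirror the three-step argument of Proposition~\ref{prop:kinfinite} nearly verbatim, with the main new ingredient being the analog of $\BC{K(n)\wedge I}=\zero$ for the telescopes.

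First, the containment in $\mathbf{cBA}$ is essentially free. The excerpt already records (citing \cite[Corollary~7.10]{hp1999}) that every $\BC{T(n)}$ lies in $\mathbf{BA}$, so Lemma~\ref{lem:curly} immediately yields $\BC{T(S)}=\bigvee_{n\in S}\BC{T(n)}\in\mathbf{cBA}$.

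Second, for the bound $\BC{T(S)}\geq\BC{I}$, I would reduce to the $K$-theoretic case already handled. From $\BC{T(n)}=\BC{K(n)}\vee\BC{A(n)}\geq\BC{K(n)}$ we obtain $\BC{T(S)}\geq\BC{K(S)}$, so every $T(S)$-acyclic spectrum is $K(S)$-acyclic. Hovey's fact that $M(p)$ is $K(S)$-local (invoked in Proposition~\ref{prop:kinfinite}) therefore upgrades automatically to $T(S)$-locality of $M(p)$, exhibiting a finite $T(S)$-local and letting \cite[Proposition~7.2]{hp1999} conclude $\BC{T(S)}\geq\BC{I}$.

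Third, to see that $\BC{T(S)}\notin\mathbf{BA}$, I would argue by contradiction exactly as in Proposition~\ref{prop:kinfinite}. If $\BC{T(S)}\in\mathbf{BA}$, then the existence of a finite local together with \cite[Lemma~7.9]{hp1999} forces $\BC{T(S)\wedge I}\neq\zero$, while distributivity of $\wedge$ over $\vee$ delivers $\BC{T(S)\wedge I}=\bigvee_{n\in S}\BC{T(n)\wedge I}$. It thus suffices to verify $\BC{T(n)\wedge I}=\zero$ for every $n\in\mathbb{N}$. This is the main obstacle, since it is not an immediate citation: $\BC{T(n)}$ strictly dominates $\BC{K(n)}$ by the failure $\BC{A(n)}$ of the telescope conjecture, so the $K$-theoretic identity quoted in Proposition~\ref{prop:kinfinite} does not transfer formally. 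I would instead use $\BC{I}\leq\BC{H\mathbb{F}_p}$ (already invoked in Theorem~\ref{thm:notcBA}) to reduce to showing $\BC{T(n)\wedge H\mathbb{F}_p}=\zero$; for $n\geq 1$ the element $v_n$ acts invertibly on $T(n)\wedge H\mathbb{F}_p$ from the telescope factor yet nilpotently from the $H\mathbb{F}_p$ factor (since $v_n$ has positive degree while $\pi_*H\mathbb{F}_p$ is concentrated in degree $0$), forcing the smash product to be contractible, and the case $n=0$ is immediate from $T(0)\simeq H\mathbb{Q}_{(p)}$.
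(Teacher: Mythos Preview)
Your three-step structure is exactly what the paper intends by ``an argument similar to the proof of Proposition~\ref{prop:kinfinite}'': Lemma~\ref{lem:curly} for membership in $\mathbf{cBA}$, a finite local (via $\BC{T(S)}\geq\BC{K(S)}$) to obtain $\BC{T(S)}\geq\BC{I}$, and a contradiction from $\BC{T(n)\wedge I}=\zero$ through \cite[Lemma~7.9]{hp1999} and distributivity. The reduction $\BC{T(n)\wedge I}\leq\BC{T(n)\wedge H\mathbb{F}_p}$ via $\BC{I}\leq\BC{H\mathbb{F}_p}$ is also fine.

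The one step that does not stand as written is your justification of $T(n)\wedge H\mathbb{F}_p\simeq *$ for $n\geq 1$. The phrase ``$v_n$ acts nilpotently from the $H\mathbb{F}_p$ factor since $\pi_*H\mathbb{F}_p$ is concentrated in degree~$0$'' does not parse: the periodicity map is not a self-map of $H\mathbb{F}_p$, and boundedness of $\pi_*H\mathbb{F}_p$ by itself says nothing about nilpotence on the smash product. The argument that works uses the \emph{finiteness of the underlying complex}: write $T(n)=F[v^{-1}]$ for a finite type~$n$ complex $F$ with $v_n$-self-map $v$ of positive degree; then $F\wedge H\mathbb{F}_p$ is an $H\mathbb{F}_p$-module with $\pi_*$ bounded (since $H_*(F;\mathbb{F}_p)$ is finite-dimensional), so the positive-degree $H\mathbb{F}_p$-module self-map $v\wedge 1$ is nilpotent and its telescope vanishes. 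Alternatively, and this is what the paper does in the proof of the theorem immediately following, one simply cites the proof of \cite[Proposition~6.14]{hp1999} for $\BC{H\mathbb{F}_p}\wedge\bigvee_{n\in\mathbb{N}}\BC{T(n)}=\zero$, which together with $\BC{I}\leq\BC{H\mathbb{F}_p}$ yields $\BC{T(n)\wedge I}=\zero$ at once.
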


\begin{thm}
Let $S \subseteq \mathbb{N} \cup \{\infty\}$ be a coinfinite subset with
$\infty \in S$. Then $\BC{T(S)}$ is not in $\mathbf{cBA}$.
\end{thm}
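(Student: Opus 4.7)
The plan is to mirror the proof of Theorem \ref{thm:notcBA} verbatim, replacing Morava $K$-theories $K(n)$ by their telescopes $T(n)$ throughout. Writing $\overline{S} := (\mathbb{N} \cup \{\infty\}) \setminus S$, I would apply the Dichotomy Lemma \ref{lem:dich} to $X := T(S)$, $Y := T(\overline{S})$, with $E := I$, the Brown-Comenetz dual of the $p$-local sphere spectrum.

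Three of the five hypotheses are routine. Since $\infty \in S$, the complement $\overline{S}$ is an infinite subset of $\mathbb{N}$, so Proposition \ref{prop:tinfinite} supplies $\BC{T(\overline{S})} \in \mathbf{cBA} \subseteq \mathbf{DL}$ together with $\BC{T(\overline{S})} \geq \BC{I}$. Membership $\BC{T(S)} \in \mathbf{DL}$ follows from closure of $\mathbf{DL}$ under arbitrary joins: since $\wedge$ distributes over wedges, $(\bigvee_i \BC{X_i}) \wedge (\bigvee_i \BC{X_i}) = \bigvee_{i,j} \BC{X_i \wedge X_j}$, which collapses to $\bigvee_i \BC{X_i}$ when each $\BC{X_i}$ is smash idempotent. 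The inequality $\BC{T(S)} \geq \BC{I}$ is immediate from $\infty \in S$: $\BC{T(S)} \geq \BC{T(\infty)} = \BC{H\mathbb{F}_p} \geq \BC{I}$, where the final step is \cite[Lemma 7.1(c)]{hp1999}.

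The main obstacle is condition (3) of the Dichotomy Lemma, namely $\BC{T(S)} \wedge \BC{T(\overline{S})} = \zero$. By distributivity of $\wedge$ over arbitrary joins this reduces to the pairwise vanishings $\BC{T(n)} \wedge \BC{T(m)} = \zero$ for distinct $n, m \in \mathbb{N} \cup \{\infty\}$. For finite $n, m$ one expands using $\BC{T(n)} = \BC{K(n)} \vee \BC{A(n)}$ and appeals to the classical orthogonality of distinct Morava $K$-theories, the $K(n)$-acyclicity of $T(m)$ for $n \neq m$ (which handles the cross terms $\BC{K(n)} \wedge \BC{A(m)}$ via $\BC{A(m)} \leq \BC{T(m)}$), and the analogous statements for the telescope-failure spectra $A(n)$ from \cite[\S 5]{hp1999}. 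The mixed case with one index equal to $\infty$ reduces to $\BC{T(n)} \wedge \BC{H\mathbb{F}_p} = \zero$ for $n \in \mathbb{N}$, which follows from nilpotence of a positive-degree self map on the bounded-below mod-$p$ homology of a finite type-$n$ spectrum (for $n = 0$ it is the vanishing $H\mathbb{Q} \wedge H\mathbb{F}_p \simeq *$).

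Once these conditions are in hand, the Dichotomy Lemma forces at least one of $\BC{T(S)}$ and $\BC{T(\overline{S})}$ out of $\mathbf{cBA}$. Since Proposition \ref{prop:tinfinite} already locates $\BC{T(\overline{S})}$ inside $\mathbf{cBA}$, we conclude $\BC{T(S)} \in \mathbf{DL} \setminus \mathbf{cBA}$, in exact parallel to the conclusion of Theorem \ref{thm:notcBA}.
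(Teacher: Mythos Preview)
Your proposal is correct and follows essentially the same route as the paper: apply the Dichotomy Lemma with $X=T(S)$, $Y=T(\overline{S})$, $E=I$, use $\infty\in S$ (via $\BC{H\mathbb{F}_p}\geq\BC{I}$) and Proposition~\ref{prop:tinfinite} for conditions (4) and (5), reduce condition (3) to the pairwise orthogonality $\BC{T(n)}\wedge\BC{T(m)}=\zero$, and conclude using that $\BC{T(\overline{S})}\in\mathbf{cBA}$. The only cosmetic differences are that the paper simply cites \cite[\S 5]{hp1999} and the proof of \cite[Proposition~6.14]{hp1999} for the orthogonality rather than sketching it, and obtains $\BC{T(S)}\geq\BC{I}$ via the chain $\BC{T(S)}\geq\BC{K(S)}\geq\BC{I}$ instead of going through $\BC{T(\infty)}$ directly.
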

\begin{proof}
Again, we use the Brown-Comenetz dual of the $p$-local sphere as $E$ in the
Dichotomy Lemma. Let $\overline{S}$ be the complement of $S$.
As $\BC{T(n)} \geq \BC{K(n)}$ and as $\infty \in S$ we have that
$$ \bigvee_{n \in S} \BC{T(n)} \geq \bigvee_{n \in S} \BC{K(n)} \geq \BC{I}$$
and $\bigvee_{n \in \overline{S}} \BC{T(n)} \geq \BC{I}$.
The telescopes satisfy $\BC{T(n)} \wedge \BC{T(m)} = \zero$ for $m \neq n$:
cf.\ \cite[\S 5]{hp1999} for the cases $n \neq \infty \neq m$ and cf.\ 
the proof of
\cite[Proposition 6.14]{hp1999} for $\BC{H\mathbb{F}_p)} \wedge \bigvee_{\mathbb{N}}
\BC{T(n)} = \zero$. Therefore we obtain that one of
$\bigvee_{n \in S} \BC{T(n)}$ or $\bigvee_{n \in \overline{S}} \BC{T(n)} $ cannot be an
element of $\mathbf{cBA}$, but $\bigvee_{n \in \overline{S}} \BC{T(n)}$ is in
$\mathbf{cBA}$ by Proposition \ref{prop:tinfinite}.
\end{proof}

\end{document}